\documentclass[11pt]{amsart}
\usepackage[margin=1.1in,top=3.8cm, head=29pt]{geometry}

\usepackage[T1]{fontenc}
\usepackage[english]{babel}
\usepackage{fancyvrb}

\usepackage{fancyhdr}
\pagestyle{fancy}
\fancyfoot[C]{\thepage} 
\fancyhead[RO,LE]{}

\fancyhead[RE,LO]{}

\usepackage{amsmath}         
\usepackage{mathtools}       
\usepackage{amssymb}   
\usepackage{amsthm}
\usepackage{mathrsfs}
	                      
\usepackage{enumitem}
\usepackage{tikz-cd}
\usepackage{tikz}
\usepackage{graphicx}
\usepackage{url}

\usepackage{float}
\usepackage{graphicx}

\usepackage{hyperref}
\hypersetup{
	colorlinks=true,
	linkcolor=blue,
	citecolor=blue,
	urlcolor=blue}


\newcommand{\Z}{\mathbb{Z}}
\newcommand{\Q}{\mathbb{Q}}

\newcommand{\Mb}{\overline{\mathcal M}}
\newcommand{\Ac}{\mathcal A}
\newcommand{\Bc}{\mathcal B}
\newcommand{\Cc}{\mathcal C}

\newcommand{\Gc}{\mathcal G}
\newcommand{\Hc}{\mathcal H}

\newcommand{\Jc}{\mathcal J}

\renewcommand{\Mc}{\mathcal M}

\newcommand{\Oc}{\mathcal O}

\newcommand{\Yc}{\mathcal Y}
\newcommand{\Zc}{\mathcal Z}

\newcommand{\Tor}{\operatorname{Tor}}
\newcommand{\CH}{\operatorname{CH}}

\newtheorem{thm}{Theorem}

\newtheorem{lem}[thm]{Lemma}
\newtheorem{cor}[thm]{Corollary}
\newtheorem{conj}{Conjecture}

\theoremstyle{definition}

\newtheorem{rmk}{Remark}

\author{Aitor Iribar Lopez}
\title{The Euler characteristic of $\Ac_g$ via Hodge integrals}
\date{}

\begin{document}
\setlength{\footskip}{19.0pt}.

\begin{abstract}
We prove the Harder-Siegel formula for the Euler characteristic of $\Ac_g$ via the intersection theory of $\Mb_g$ and a vanishing result for lambda classes on the boundary of the toroidal compactifications of $\Ac_g$, recently proven by Canning, Molcho, Oprea and Pandharipande.
\end{abstract}

\maketitle

\section*{Introduction}

The moduli space of principally polarized abelian varieties of dimension $g$ is a smooth Deligne-Mumford stack of dimension $\binom{g+1}{2}$, usually denoted by $\Ac_g$. Over the complex numbers, this is the same as saying that $\Ac_g$ is an orbifold, and it can be identified with the stack quotient of the Siegel upper half space by the group of symplectic matrices with integer coefficients
\begin{equation}\label{eqn: identification complex}
    \Ac_g \cong [\Hc_g / \operatorname{Sp}(2g, \mathbb Z)]\, .
\end{equation}
The Gauss-Bonnet formula \cite{Har71} of Harder shows that the Euler characteristic of $\Ac_g$ can be obtained from the volume of a fundamental domain for the action of $\operatorname{Sp}(2g, \Z)$ on $\Hc_g$, which was known to Siegel \cite{Sie36}:
\begin{thm}\label{thm: eulerchar Ag}
    The Euler characteristic of $\Ac_g$ is
\begin{equation}\label{eqn: eulerchar}
    \chi(\Ac_g) = \zeta(-1) \cdot\ldots\cdot \zeta (1-2g)\, .
\end{equation}
\end{thm}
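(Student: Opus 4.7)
The plan is to apply a version of Gauss--Bonnet on a toroidal compactification of $\Ac_g$, rewrite the top Chern class of the logarithmic cotangent bundle as a polynomial in $\lambda$-classes of the Hodge bundle, and reduce the resulting intersection problem to a Hodge integral on $\Mb_g$ with a known evaluation in Bernoulli numbers.

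I would first fix a smooth toroidal compactification $\overline{\Ac}_g^{\mathrm{tor}}$ with simple normal crossings boundary $D$. The logarithmic Gauss--Bonnet formula for open orbifolds then gives
$$\chi(\Ac_g) = \int_{\overline{\Ac}_g^{\mathrm{tor}}} c_{\binom{g+1}{2}}\bigl(\Omega^1_{\overline{\Ac}_g^{\mathrm{tor}}}(\log D)\bigr).$$
The canonical isomorphism $\Omega^1_{\overline{\Ac}_g^{\mathrm{tor}}}(\log D) \cong \operatorname{Sym}^2 \mathbb{E}$, where $\mathbb{E}$ is the (extended) Hodge bundle, converts this into an integral of a universal polynomial $P(\lambda_1, \ldots, \lambda_g)$ of degree $\binom{g+1}{2}$ in the $\lambda$-classes $\lambda_i = c_i(\mathbb{E})$.

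Next I would invoke the vanishing theorem of Canning--Molcho--Oprea--Pandharipande: every top-degree monomial in the $\lambda$-classes restricts to zero on the toroidal boundary $D$. Since the Torelli morphism $\tau \colon \Mb_g \to \overline{\Ac}_g^{\mathrm{tor}}$ pulls $\mathbb{E}$ back to the Hodge bundle on curves, this permits each $\lambda$-monomial integral on $\overline{\Ac}_g^{\mathrm{tor}}$ to be extracted from intersection-theoretic data on $\Mb_g$. Finally, I would evaluate the resulting Hodge integral using Mumford's Grothendieck--Riemann--Roch formula
$$\operatorname{ch}_{2k-1}(\mathbb{E}) = \frac{B_{2k}}{(2k)!}\bigl(\kappa_{2k-1} + \text{boundary corrections}\bigr),$$
which expresses $P$ in terms of tautological classes with Bernoulli-number coefficients, and match the output with $\prod_{k=1}^g \bigl(-B_{2k}/(2k)\bigr) = \prod_{k=1}^g \zeta(1-2k)$ via the functional equation of $\zeta$.

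The main obstacle will be the middle step. For $g \geq 4$ the Torelli image has codimension $\binom{g-2}{2}$ inside $\overline{\Ac}_g^{\mathrm{tor}}$, so there is no direct pull-back of a top Chern class to $\Mb_g$. The CMOP boundary vanishing, together with a careful use of the stratification of the toroidal boundary by lower-genus abelian data, is what must bridge this dimension gap and license the transfer of integrals from $\overline{\Ac}_g^{\mathrm{tor}}$ to $\Mb_g$. Once that reduction is secured, the final evaluation on $\Mb_g$ should be a direct combinatorial manipulation of Hodge integrals producing the required product of Bernoulli numbers.
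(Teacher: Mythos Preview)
Your first two steps match the paper exactly: logarithmic Gauss--Bonnet on a toroidal compactification together with $\Omega^1(\log D)\cong\operatorname{Sym}^2\mathbb{E}_g$ reduces $\chi(\Ac_g)$ to the integral of a polynomial in $\lambda$-classes. (A minor point: the Gauss--Bonnet formula carries the sign $(-1)^{\binom{g+1}{2}}$, and Mumford's relation collapses the Giambelli determinant to the single monomial $2^g\lambda_1\cdots\lambda_g$.)

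The genuine gap is your middle step. You propose that the CMOP boundary vanishing ``permits each $\lambda$-monomial integral on $\overline{\Ac}_g^{\mathrm{tor}}$ to be extracted from intersection-theoretic data on $\Mb_g$'' via the Torelli map. This does not follow. Pushing forward along $\operatorname{Tor}$ and using the projection formula gives
\[
\int_{\Mb_g}\operatorname{Tor}^*\alpha \;=\; \int_{\overline{\Ac}_g}\alpha\cdot[\Jc_g],
\]
so Hodge integrals on $\Mb_g$ compute integrals against the Jacobian locus, not integrals on $\overline{\Ac}_g$. For $g\ge 4$ the class $[\Jc_g]$ has positive codimension $\binom{g-2}{2}$, and indeed $\operatorname{Tor}^*(\lambda_1\cdots\lambda_g)=0$ for degree reasons since $\binom{g+1}{2}>3g-3$. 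No amount of boundary vanishing repairs this: vanishing of $\lambda_g$ on $D$ tells you the integral is determined by data on $\Ac_g$, but $\Mb_g$ still does not cover $\Ac_g$. Your proposed evaluation via Mumford's GRR on $\Mb_g$ therefore computes the wrong number.

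The missing idea, which the paper supplies, is an \emph{inductive reduction in $g$} rather than a transfer to $\Mb_g$. The key geometric input is that on $\Ac_g^{\leq 1}$ one has $\lambda_g=\tau(g)\,[\Bc_g]$ with $\tau(g)=|B_{2g}|/(2g)$, where $\Bc_g\cong\Ac_{g-1}\times\mathrm{B}\Z/2\Z$ is the locus of trivial $\Gm$-extensions. Since $j^*\mathbb{E}_g=\mathbb{E}_{g-1}\oplus\Oc$ under $j:\Ac_{g-1}\to\Ac_g^{\leq 1}$, this yields
\[
\int_{\overline{\Ac}_g}\lambda_g\lambda_{g-1}\cdots\lambda_1 \;=\; \frac{\tau(g)}{2}\int_{\overline{\Ac}_{g-1}}\lambda_{g-1}\cdots\lambda_1,
\]
hence $\chi(\Ac_g)=(-1)^g\tau(g)\,\chi(\Ac_{g-1})=\zeta(1-2g)\,\chi(\Ac_{g-1})$. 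The Torelli map and the Hodge integrals $\int_{\Mb_g}\lambda_g\lambda_{g-1}\lambda_{g-2}$ and $\int_{\Mb_{g-1,1}}\lambda_{g-1}\lambda_{g-2}c(\mathbb{E}_{g-1}^\vee)/(1-\psi)$ enter only to \emph{determine the proportionality constant} $\tau(g)$, by pairing both $\lambda_g$ and $[\Bc_g]$ against $\lambda_{g-2}[\Jc_g^{\leq 1}]$; here the CMOP vanishing is what makes the $\lambda_{g-1}$-evaluation on $\Ac_g^{\leq 1}$ well defined.
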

We employ the \emph{logarithmic Gauss--Bonnet formula} to reduce the calculation of the Euler characteristic of $\Ac_g$ to an intersection theoretic problem on the toroidal compactifications of $\Ac_g$, which is solved via Hodge integrals and the Torelli map

On every toroidal compactification of $\Ac_g$, there is a universal semiabelian scheme $\pi : \Gc_g \to \overline{\Ac}_g$ with a zero section $s$. The \emph{Hodge bundle} is a vector bundle of rank $g$ defined by
$$
\mathbb E_g=s^* \Omega_{\pi}\, ,
$$
and its Chern classes are denoted by $\lambda_i = c_i(\mathbb E_g)$. Let $\Mb_{g,n}$ denote the moduli space of stable curves of genus $g$ with $n$ markings, and let
$$
\pi: \overline{\Cc}_{g,n} \to \Mb_{g,n}
$$
be the universal curve, with sections $s_1, \ldots , s_n$ corresponding to the markings. It is a smooth DM stack of dimension $3g-3 +n$ and it has a \emph{Hodge bundle}, defined by
$$
\mathbb E_g = \pi_* \omega_{\pi}.
$$
We denote its Chern classes by $\lambda_i$ also. On some toroidal compactifications there is a Torelli morphism
$$
\operatorname{Tor} : \Mb_{g,n} \longrightarrow\overline{\Ac}_g\, ,
$$
as shown in \cite{Ale04, Nam76}, and $\Tor^* \mathbb E_g = \mathbb E_g$. Esnault and Viehweg showed in \cite{EV02} that the $\lambda$ classes satisfy \emph{Mumford's relation}:
\begin{equation}\label{eqn: Mumford}
    (1 + \lambda_1 + \ldots + \lambda_g)(1-\lambda_1 + \ldots + (-1)^g\lambda_g) = c(\mathbb E_g\oplus \mathbb E^\vee_g) =1 \text{ in }\CH^*(\overline{\Ac}_g)\, .
\end{equation}

On $\Mb_{g,n}$ there are also line bundles $\mathbb L_i=s_i^* \omega_\pi$ representing the cotangent space at the $i$-th marking, and its first Chern class is $\psi_i$. The $\psi$ classes and the $\lambda$ classes are part of the \emph{tautological ring} of $\Mb_{g,n}$ (see \cite{FP13, P18} for the definition). Integrals of the form
$$
\int_{\Mb_{g,n}} \lambda_1^{a_1}\ldots \lambda_g^{a_g} \psi_1^{b_1} \ldots \psi_n^{b_n}
$$
are \emph{Hodge integrals}, and they appear naturally in Gromov-Witten theory.

We will consider the locus $\Bc_g$ of semiabelian varieties which are trivial extension of an abelian variety by a torus. $\lambda_g$ is proportional to the class $[\Bc_g]$ when we restrict ourselves to an open subset of $\overline{\Ac}_g$ given by degenerations of torus rank at most $1$ \cite{vdG99, EvdG04}. Then we compute the proportionality factor in two ways. The first is through the isomorphism $\Bc_g \cong \Ac_{g-1} \times \mathrm{B}\Z/2\Z$, and the second one is pulling back via the Torelli map. We will see that the formula for the Euler characteristic \eqref{eqn: eulerchar} follows from the evaluation of the Hodge integrals
$$
\int_{\Mb_g} \lambda_g \lambda_{g-1}\lambda_{g-2}\quad\text{and}\quad \int_{\Mb_{g,1}} \frac{\lambda_{g}\lambda_{g-1}c(\mathbb E_{g}^\vee)}{1-\psi}\, ,
$$
which had been computed by Faber and Pandharipande \cite{FP00, FP00b}.

To a polarization on an abelian variety of dimension $g$ we can associate a list of numbers $\delta = (d_1, \ldots , d_g)$, where $d_i \, | \,d_{i+1}$, and there is a moduli stack of abelian varieties of dimension $g$ together with a polarization of type $\delta$, denoted by $\Ac_{g,\delta}$. When $d_i=1$ for all $i$, we recover $\Ac_g$. These moduli spaces are part of a tower of \'etale maps. By the degree calculations of \cite{I24}, the Euler characteristics of all the moduli spaces $\Ac_{g,\delta}$ is determined:

\begin{thm}\label{thm: t1}
    For a list of positive integers $\delta = (d_1, \ldots , d_g)$ such that $d_i \, | \, d_{i+1}$ for all $i$,
    $$
    \chi(\Ac_{g, \delta}) = \left(d_g^{2g-2}d_{g-1}^{2g-6}\ldots d_1^{-2g+2}\prod_{1\leq i<j\leq g} \prod_{p \mid d_j/d_i}\frac{(1-p^{-2(j-i+1)})}{(1-p^{-2(j-i)})}\right)\chi(\Ac_{g}),
    $$
    where the product is over primes $p$.
\end{thm}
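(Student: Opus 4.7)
The plan is to derive Theorem \ref{thm: t1} from Theorem \ref{thm: eulerchar Ag} by combining two ingredients: the multiplicativity of the Euler characteristic along finite étale morphisms of Deligne--Mumford stacks, and the degree calculations of \cite{I24} in the tower of moduli spaces $\{\Ac_{g,\delta}\}$ mentioned in the paragraph preceding the statement.

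I would first recall the standard fact that, for any finite étale morphism $f\colon \Xc \to \Yc$ of DM stacks,
$$
\chi(\Xc) = \deg(f)\cdot \chi(\Yc),
$$
which reflects the fact that $f$ pulls the (rational) constant sheaf back to itself.

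Next, I would realize $\Ac_g$ and $\Ac_{g,\delta}$ as the two ends of a Hecke-type correspondence
$$
\Ac_g \longleftarrow \Zc \longrightarrow \Ac_{g,\delta},
$$
where the middle stack $\Zc$ parameterizes abelian varieties equipped with a principal polarization together with a compatible polarization of type $\delta$, linked by an isogeny. Both legs sit inside the étale tower referenced in the excerpt, and the ratio of their degrees therefore equals $\chi(\Ac_{g,\delta})/\chi(\Ac_g)$ by the multiplicativity just recalled.

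Finally, I would substitute the explicit degree formulas from \cite{I24} and simplify to obtain the stated product. The monomial prefactor $d_g^{2g-2}d_{g-1}^{2g-6}\cdots d_1^{-2g+2}$ arises from the global rescaling of the polarizing lattices, while the local factor $\prod_{p\mid d_j/d_i}(1-p^{-2(j-i+1)})/(1-p^{-2(j-i)})$ comes from counting symplectic flags in the $p$-part of the kernel of the connecting isogeny, equivalently from indices of parahoric subgroups of $\operatorname{Sp}(2g,\Q_p)$ stabilizing the prescribed polarization type. The main obstacle is precisely this combinatorial identification: once the degrees are extracted from \cite{I24}, repackaging them into the compact product form in the statement requires careful bookkeeping of the elementary divisors $d_j/d_i$ and of the flag combinatorics at each prime. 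This is essentially the content of the cited degree computations, after which Theorem \ref{thm: t1} is a direct consequence of Theorem \ref{thm: eulerchar Ag}.
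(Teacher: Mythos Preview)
Your proposal is correct and follows essentially the same route as the paper: a roof of finite \'etale maps over $\Ac_g$ and $\Ac_{g,\delta}$, multiplicativity of $\chi$ along \'etale covers, and the degree formula from \cite{I24}. The only cosmetic difference is that the paper names the intermediate stack concretely as the level-structure moduli $\Ac_{g,\delta}^{\mathsf{lev}}$ (triples $(A,\theta,F)$ with $F$ a symplectic basis of $\ker\theta$), with the two legs being the forgetful map $\pi_\delta$ and the quotient-by-a-Lagrangian map $\varphi_\delta$, rather than your more abstract Hecke-correspondence description.
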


We will prove in Corollary \ref{cor: nonzero} that the non-vanishing of $\chi(\Ac_g)$ implies the non-vanishing of $\lambda_{g-1}\ldots \lambda_1$ on $\CH^*(\Ac_g)$. This result was first established in \cite[Corollary 1.3]{vdG99} using the geometry of $\Ac_g$ over fields of positive characteristic.

\subsection*{Further directions}

The main geometric input for this formula is that $\lambda_g$ is proportional to $[\Bc_g]$ on $\Ac_g^{\leq 1}$, which follows from a residue calculation in \cite{EvdG04} to express $\lambda_g$ in terms of boundary strata of the toroidal compactifications of $\Ac_g$. Improvements of this result for the locus given by degenerations of abelian varieties of torus rank at most $k$ for small $k$ would lead to more connections to the intersection theory of $\Mb_g$. Johannes Schmitt has checked that $[\Bc_2]$ is \emph{not} proportional to $\lambda_2$ on $\overline{\Ac}_2$, so a deeper understanding is needed for torus rank at least $2$.

Note that $\Bc_g$ is one of the two components of the closure of the \emph{product locus} $\Ac_1 \times \Ac_{g-1}$ in $\Ac^{\leq 1}_g$. In \cite{COP}, the authors compute $\Tor^*([\Ac_1 \times \Ac_{g-1}])$ and prove that it is a tautological class on $\Mc_g^{ct}$. Given our presentation of the fibered product $\Mc_g^{\leq 1} \times_{\Ac_g} \Bc_g$ in Lemma \ref{lem: maps}, we think it is reasonable to expect the following:
\begin{conj}
    $\operatorname{Tor}^*([\overline{\Ac_1 \times \Ac_{g-1}}])$ lies in the tautological ring $R^*(\Mc_g^{\leq 1})$.
\end{conj}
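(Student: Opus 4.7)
The strategy is to use Chow-theoretic excision to split the conjecture into a compact-type part, covered by \cite{COP}, and a torus-rank-$1$ boundary part, which must be analyzed separately. Concretely, decompose $\Mc_g^{\leq 1}$ as the union of the compact-type locus $\Mc_g^{ct}$ and its complement $\overline{\Delta}_0 \cap \Mc_g^{\leq 1}$, the closed substack of curves carrying a non-separating node. The Chow excision sequence
\[
\operatorname{CH}^*\bigl(\overline{\Delta}_0 \cap \Mc_g^{\leq 1}\bigr) \xrightarrow{\;i_*\;} \operatorname{CH}^*\bigl(\Mc_g^{\leq 1}\bigr) \longrightarrow \operatorname{CH}^*\bigl(\Mc_g^{ct}\bigr) \to 0
\]
reduces the conjecture to two statements: the tautology of the restriction of $\operatorname{Tor}^*[\overline{\Ac_1 \times \Ac_{g-1}}]$ to $\Mc_g^{ct}$, which is exactly the main theorem of \cite{COP}, and the tautology of the excess contribution $i_*\gamma$ from the boundary divisor.

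For the excess, work on the open stratum $\Delta_0^{\circ} \cap \Mc_g^{\leq 1}$, identified via partial normalization (and modulo the $S_2$ action) with $\Mc_{g-1,2}$. The extended Torelli map factors through the universal Abel--Jacobi map
\[
(\tilde C,p,q) \;\longmapsto\; \bigl(\operatorname{Jac}(\tilde C),\;[\mathcal O_{\tilde C}(p-q)]\bigr),
\]
which realizes the boundary of $\overline{\Ac}_g^{\leq 1}$ as a quotient of the universal semiabelian $\Gc_{g-1}$ restricted to $\Ac_{g-1}$. The restriction of $\overline{\Ac_1 \times \Ac_{g-1}}$ to this boundary is a union of two pieces: the zero section $\Bc_g$, parametrizing limits where the $\Ac_1$ factor of the product degenerates, and the preimage of $\overline{\Ac_1 \times \Ac_{g-2}} \subset \Ac_{g-1}$ in the universal semiabelian, parametrizing limits where the $\Ac_{g-1}$ factor degenerates.

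The contribution of the first piece is tautological by Lemma \ref{lem: maps}, which presents $\Mc_g^{\leq 1} \times_{\Ac_g} \Bc_g$ in tautological terms. The contribution of the second piece is handled by induction on $g$: the inductive hypothesis, applied in dimension $g-1$, makes $\operatorname{Tor}^*[\overline{\Ac_1 \times \Ac_{g-2}}]$ tautological on $\Mc_{g-1}^{\leq 1}$; this pulls back tautologically along the forgetful map $\Mc_{g-1,2} \to \Mc_{g-1}$ (and its extension handling the deeper strata), and must be combined with the Abel--Jacobi constraint that $[\mathcal O_{\tilde C}(p-q)]$ lies in the specified abelian subvariety of $\operatorname{Jac}(\tilde C)$. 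The base case $g=2$ is a direct verification, since the two boundary pieces coincide with $\Bc_2$.

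The main obstacle is the last step: expressing tautologically on $\Mc_{g-1,2}$ the cycle cut out by the Abel--Jacobi condition that $p-q$ map into a prescribed abelian subvariety of the universal Jacobian. This is an Abel--Jacobi-type tautological computation analogous to the arguments in \cite{COP}, and is the step most likely to require substantive new input. Deeper strata of $\overline{\Delta}_0 \cap \Mc_g^{\leq 1}$, where the non-separating node coexists with separating ones, reduce by standard gluing formulas to the cases already handled.
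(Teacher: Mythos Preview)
The statement you are attempting to prove is stated in the paper as a \emph{Conjecture}, not as a theorem; the paper gives no proof and explicitly frames it as an expectation motivated by Lemma~\ref{lem: maps}. So there is no ``paper's own proof'' to compare your attempt against.

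As for the proposal itself, it is a strategy outline rather than a proof, and you already identify the gap yourself: the Abel--Jacobi constraint that $[\Oc_{\tilde C}(p-q)]$ lie in a prescribed abelian subvariety of $\operatorname{Jac}(\tilde C)$ is not known to cut out a tautological class on $\Mc_{g-1,2}$. This is not a technicality---it is essentially the content of the conjecture transported one step down, so the induction does not close. A second, smaller gap: invoking Lemma~\ref{lem: maps} to declare the $\Bc_g$ contribution tautological is too quick. That lemma only describes the fibered product $\Mc_g^{\leq 1}\times_{\Ac_g}\Bc_g$ set-theoretically as a union of images of gluing maps $\xi_\mu$; the components have varying codimension, the non-principal ones are not closed, and extracting the actual refined class $j^![\Bc_g]$ requires an excess/residual computation (the paper carries this out only for the principal component, and only after capping with $\lambda_{g-2}$ to kill the residue). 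Finally, the excision argument needs care: knowing that a class restricts to something tautological on $\Mc_g^{ct}$ tells you it differs from a tautological class by a pushforward from the boundary, but you must exhibit \emph{that specific} boundary correction as tautological, not merely some boundary class.
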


\subsection*{Acknowledgements}

This article is the result of a series of conversations between the author and Rahul Pandharipande during the Alpine Algebraic Geometry Workshop in Obergurgl, Austria in September 2024. The author is also grateful to Sam Molcho and Johannes Schmitt for conversations about $\Ac_g$ and its compactifications, to Gerard van der Geer for his comments about the history of the formula for $\chi(\Ac_g)$ and to Dragos Oprea for pointing out the article \cite{EvdG04}. This project has received support from SNF-200020-219369.

\section*{Logarithmic Euler characteristic}

If $\Yc$ is a smooth DM-stack of dimension $n$ with a smooth compactification $\overline{\Yc}$ such that the complement $D$ of $\Yc$ is a normal crossing divisor then the sheaf of meromorphic differentials having at most log poles along $D$ is a vector bundle of rank $n$, denoted by $\Omega_{\overline{\Yc}}(\operatorname{log}D)$ and it is well-known that
$$
\chi(\Yc)=(-1)^n\int_{\overline{\Yc}} c_{n}(\Omega_{\overline{\Yc}}(\operatorname{log}D))\, ,
$$
where $\chi$ is the Euler characteristic (see \cite[Section 2]{CMZ20} for a proof). When $\Yc$ is $\Ac_g$ and $\overline{\Yc}$ is one of its toroidal compactifications (constructed in \cite{AMRT10} over the complex numbers, or \cite{CF91} over the integers), $\Omega_{\overline{\Ac}_g}(\operatorname{log} D)$ is the \emph{canonical extension} of $\Omega_{\Ac_g}$ (see \cite[VI.4.1]{CF91} for details) so we have the following:
$$
\Omega_{\overline{\Ac}_g}(\operatorname{log} D) = \operatorname{Sym}^2 \mathbb E_g\, .
$$

The top Chern class of $\operatorname{Sym}^2\mathbb E_g$ can be computed by the Giambelli formula (\cite[Example 14.5.1]{Ful94}), and it equals
$$
2^g \left| \begin{array}{ccccc}
    \lambda_g & 0  &0& \ldots & 0\\
    \lambda_{g-2} & \lambda_{g-1} & \lambda_g  & \ldots &0\\
    \lambda_{g-4} & \lambda_{g-3} & \lambda_{g-2} & \ldots & 0\\
    \vdots & \vdots & \vdots & \ddots & \vdots \\
    0 & 0 & 0 & \ldots & \lambda_{1} 
\end{array}\right|= 2^g \left(\lambda_g \lambda_{g-1} \ldots \lambda_1 + \sum_{k=0}^{g-1}\underbrace{\lambda_g \ldots \lambda_{g-k+1}\lambda_{g-k}^2}_{\substack{=0
\text{ by Mumford's}\\
\text{ relation }\eqref{eqn: Mumford}}} p_k(\lambda_1, \ldots , \lambda_{g})\right)\, ,
$$
so
\begin{equation}\label{eqn: euler characteristic of Ag}
\chi(\Ac_g) = (-1)^{\binom{g+1}{2}} 2^g\int_{\overline{\Ac}_g} \lambda_1 \ldots \lambda_g\, .
\end{equation}

In particular, the integral on the right hand side does not depend on the toroidal compactification, and more generally any integral of $\lambda$ classes is independent of the toroidal compactification, by choosing a common roof, and it makes sense on non-smooth toroidal compactifications considering Chern classes as operational Chow classes.

An abelian variety of dimension $1$ is an elliptic curve, so
\begin{equation}\label{eqn: g=1}
\chi(\Ac_1) = - 2\int_{\overline{\Ac}_{1}} \lambda_1 = -2 \int_{\Mb_{1,1}} \psi = \zeta(-1)\, .
\end{equation}

\section*{\texorpdfstring{$\lambda_{g-k}$}{lg-k}--evaluations}

Any toroidal compactification of $\Ac_{g}$ has a canonical map to the Satake compactification
$$
\beta : \overline{\Ac}_g \longrightarrow \overline{\Ac}_g^{Sat} = \Ac_g \sqcup \Ac_{g-1} \sqcup \ldots \sqcup\Ac_0\, ,
$$
and we can obtain partial compactifications $\Ac_g^{\leq k} = \beta^{-1}(\Ac_g \sqcup \ldots \sqcup \Ac_{g-k})$ of semiabelian varieties of torus rank at most $k$. It is shown in \cite{CMOP24} that
$$
\lambda_{g-k}|_{\overline{\Ac}_g \smallsetminus \Ac_{g}^{\leq k}} =0\, .
$$
This result guarantees that the natural integration maps
$$
\epsilon_{\Ac_g^{\leq k}} \colon \operatorname{CH}^{\binom{g+1}{2}+k-g}(\Ac_g^{\leq k}) \longrightarrow \Q
$$
given by
$$
\alpha \mapsto \int_{\overline{\Ac}_g} \overline{\alpha}\lambda_{g-k}\, ,
$$
where $\overline{\alpha}$ is any extension of $\alpha$ to $\overline{\Ac}_g$ are well-defined. On the moduli space of curves we can define
$$
\epsilon_{\Mc_g^{\leq k}} \colon \operatorname{CH}^{2g-3 +k}(\Mc_g^{\leq k}) \longrightarrow \Q
$$
analogously, where $\Mc_{g}^{\leq k}$ is the moduli space of stable curves whose dual graph has Betti number at most $k$.

If the Torelli morphism extends to $\overline{\Ac}_g$, and $[\Jc_g^{\leq k}]$ is the pushforward of $1$ under this Torelli morphism to $\Ac_g^{\leq k}$, then the integration maps are related by the identity
\begin{equation}\label{eqn: two pairings}
\epsilon_{\Mc_g^{\leq k}} (\operatorname{Tor}^* (\alpha)) = \epsilon_{\Ac_g^{\leq k}} (\alpha \cdot [\Jc_g^{\leq k}]).
\end{equation}
\section*{The locus \texorpdfstring{$\Bc_g$}{Bg}}
There is a natural map
$$
j: \Ac_{g-1} \to \partial \Ac_g^{\leq 1}
$$
sending $A$ to the semiabelian variety $A \times \mathbb G_m$, whose image we denote by $\mathcal B_g$. It is isomorphic to $\Ac_{g-1} \times \mathrm{B}\Z/2\Z$ because of the extra automorphism of $\mathbb G_m$. The map $j$ extends to toroidal compactifications of $\Ac_{g-1}$ and satisfies $j^* \mathbb E_g = \mathbb E_{g-1} \oplus \mathcal O$. Therefore, the normal bundle to $j$ is
$$
N_j = \operatorname{Sym}^2(\mathbb E^\vee_{g-1} \oplus \mathcal O_{\Ac_{g-1}}) - \operatorname{Sym}^2(\mathbb E^\vee_{g-1}) = \mathbb E^\vee_{g-1} \oplus \mathcal O_{\Ac_{g-1}}\, .
$$
We have the following result, first proven in cohomology\footnote{In fact, the result in cohomology is enough for our calculation, since the $\lambda_{g-k}$-evaluation maps factor through the cycle class map.} \cite[Proposition 1.10]{vdG99}, and then in the Chow groups \cite[Theorem 1.1]{EvdG04}:
\begin{thm}\label{lem: proportional}
    In the Chow ring of $\Ac_g^{\leq 1}$, the following holds:
    $$
    \lambda_g = \frac{|B_{2g}|}{2g} [\Bc_g],
    $$
    where $B_{2g}$ is the $g$-th even Bernoulli number.
\end{thm}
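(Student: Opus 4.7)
The plan is to prove the identity in three steps: first reduce $\lambda_g$ to a boundary-supported class via GRR, then identify the support as $\Bc_g$ via the local toroidal structure, and finally extract the Bernoulli coefficient from a residue calculation.

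The first step is to show $\lambda_g|_{\Ac_g} = 0$ in $\CH^g(\Ac_g)_\Q$. I would apply Grothendieck--Riemann--Roch to the universal abelian scheme $\pi\colon\Gc_g\to\Ac_g$, whose relative tangent bundle is $T_\pi = \pi^*\mathbb E_g^\vee$ by the translation-invariant trivialization. Then $\pi_*\operatorname{td}(T_\pi) = \operatorname{td}(\mathbb E_g^\vee)\cdot\pi_*(1) = 0$ in positive codimension, since $\pi_*(1)\in\CH^{-g}(\Ac_g)$ vanishes. On the other hand $\operatorname{ch}(R\pi_*\mathcal O) = \prod_j(1-e^{-\alpha_j})$ for Chern roots $\alpha_j$ of $\mathbb E_g$, so GRR forces this product to vanish in $\CH^*(\Ac_g)_\Q$. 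The degree-$g$ component reads $\lambda_g = 0$. By the localization sequence for $\Ac_g\hookrightarrow\Ac_g^{\leq 1}$, the class $\lambda_g$ is the Gysin pushforward from the rank-$1$ boundary $\partial = \Ac_g^{\leq 1}\setminus\Ac_g$ of some $\beta\in\CH^{g-1}(\partial)$.

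The second step is to show that $\beta$ is supported on $\Bc_g$. The boundary $\partial$ admits a natural map (up to finite cover) to $\Ac_{g-1}$ whose fibers classify $\Gm$-extensions, with $\Bc_g$ the zero-section. The universal semiabelian exact sequence $1\to\Gm\to\Gc_g|_\partial\to\mathcal A\to 1$ yields the dual Lie-algebra sequence $0\to\mathcal O\to\mathbb E_g^\vee|_\partial\to\mathbb E_{g-1}^\vee\to 0$, giving $c(\mathbb E_g|_\partial) = c(\mathbb E_{g-1})$ and hence $\lambda_g|_\partial = 0$ in $\CH^*(\partial)$. A residue analysis using the logarithmic Gauss--Manin connection on $\mathbb E_g$---whose nilpotent residue at $\partial$ has image in the $\mathcal O$-subsheaf---localizes the Chern-class contribution to $\Bc_g$. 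Since $\mathrm{codim}_\partial\Bc_g = g-1$ matches the codimension of $\beta$, the pushforward source lies in $\Q\cdot[\Bc_g]_\partial$, so $\lambda_g = c\cdot[\Bc_g]$ in $\CH^g(\Ac_g^{\leq 1})$ for some $c\in\Q$.

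The third step computes $c$. Representing $\lambda_g$ by Chern--Weil with the log connection and integrating the residue contribution near $\Bc_g$ introduces the Todd generating function $x/(e^x-1) = \sum_n (B_n/n!) x^n$; the degree-$2g$ coefficient is $|B_{2g}|/(2g)$. The main obstacle is exactly this residue calculation: confirming the precise localization to $\Bc_g$ and extracting the correct Bernoulli term requires careful control of the canonical extension of $\mathbb E_g$, the log structure of the toroidal compactification, and the nilpotent residue of the Gauss--Manin connection---this is the heart of Esnault--van der Geer's computation \cite{EvdG04}.
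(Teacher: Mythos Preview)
Your outline is a faithful sketch of the Ekedahl--van der Geer argument \cite{EvdG04}: the GRR computation in Step~1 correctly forces $\lambda_g|_{\Ac_g}=0$, and you rightly identify that the localization to $\Bc_g$ and the extraction of $|B_{2g}|/(2g)$ require the residue analysis of the logarithmic Gauss--Manin connection carried out there. (One caveat: your intermediate observation $\lambda_g|_\partial=0$ does not by itself pin $\beta$ down to $\Bc_g$---only the residue computation does that, as you yourself concede at the end.)

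The paper's own proof takes a genuinely different route. It \emph{assumes} the proportionality $\lambda_g=\tau(g)[\Bc_g]$ (citing \cite{vdG99,EvdG04} for that part) and determines $\tau(g)$ by pulling back along the Torelli map. After describing the fibered product $\Mc_g^{\leq 1}\times_{\Ac_g^{\leq 1}}\Bc_g$ (Lemmas~\ref{lem: maps}--\ref{lem: lg-2 vanishing}) and an excess/residual-intersection calculation, the two numbers $\epsilon_{\Ac_g^{\leq 1}}(\lambda_{g-2}\lambda_g\cdot[\Jc_g^{\leq 1}])$ and $\epsilon_{\Ac_g^{\leq 1}}(\lambda_{g-2}[\Bc_g]\cdot[\Jc_g^{\leq 1}])$ are reduced to the Hodge integrals
\[
\int_{\Mb_g}\lambda_g\lambda_{g-1}\lambda_{g-2}
\qquad\text{and}\qquad
\int_{\Mb_{g-1,1}}\frac{\lambda_{g-1}\lambda_{g-2}\,c(\mathbb E_{g-1}^\vee)}{1-\psi}\,,
\]
both evaluated by Faber--Pandharipande \cite{FP00,FP00b}; their ratio is $|B_{2g}|/(2g)$. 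Your route, fully executed, would prove the entire statement but needs the delicate Chern--Weil/residue analysis on the toroidal boundary; the paper's route recovers only the constant, but trades that analytic input for enumerative data on $\Mb_g$---which is precisely the point of the article.
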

We will give a new proof of the proportionality factor under the assumption that the two cycles are proportional. Let $\tau(g)\in \Q$ be such that
$$
\lambda_g = \tau(g)[\Bc_g].
$$

\section*{Pullback  of \texorpdfstring{$\Bc_g$}{Bg} to \texorpdfstring{$\mathcal{M}^{\leq 1}_g$}{Mg}}

Consider the Cartesian diagram
\begin{equation}\label{eqn: cartesian aquare}
    \begin{tikzcd}
\mathcal Z \arrow[d] \arrow[r] &  \Mc_{g}^{\leq 1} \arrow[d, "\operatorname{Tor}"]\\
\Bc_g  \arrow[r, "j"]   & \Ac^{\leq 1}_g                       
\end{tikzcd}
\end{equation}

where $j$ is a regular embedding.
\begin{lem}\label{lem: maps}
    For a partition $\mu = (g_1, \ldots , g_l)$ of $g-1$ with $g_1 \leq \ldots \leq g_l$, let $\Mc_{1,l}^{cycle}$ be the substack of $\Mb_{1,l}$ given by curves without rational tails and whose normalization is a union of rational curves, and consider the gluing map
    $$
    \xi_{\mu} : \Mc_{g_1,1}^{ct} \times \ldots \times \Mc_{g_l, 1}^{ct} \times \Mc_{1,l}^{cycle} \longrightarrow \Mc_{g}^{\leq 1}
    $$
    that attaches the marked point of the $i$-th moduli space of compact type to the $i$-th market point of $\Mc_{1,l}^{cycle}$. The images of the morphisms $\xi_\mu$ when $\mu$ runs through all the partitions of $g-1$ are disjoint and cover $\mathcal Z$.
\end{lem}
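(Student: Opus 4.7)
The plan is to identify the geometric points of $\mathcal Z$ as exactly the stable curves whose generalized Jacobian is a \emph{trivial} $\Gm$-extension, and then to recognize such curves combinatorially as the images of the gluing maps $\xi_\mu$. Since $\operatorname{Tor}(C) = \operatorname{Jac}(C)$ and $\Bc_g = j(\Ac_{g-1})$ is precisely the locus of trivial $\Gm$-extensions, a geometric point $C$ lies in $\mathcal Z$ iff $\operatorname{Jac}(C) \cong A \times \Gm$ for some abelian variety $A$. The torus rank of $\operatorname{Jac}(C)$ equals $b_1$ of the dual graph $\Gamma_C$, so compact type curves (where $b_1 = 0$) are excluded, and every point of $\mathcal Z$ has $b_1(\Gamma_C) = 1$.

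For such a curve, $\Gamma_C$ contains a unique cycle and the remaining vertices form trees attached to its vertices. Writing $\tilde C_v$ for the normalization of the component $C_v$, the Jacobian sits in
$$
1 \to \Gm \to \operatorname{Jac}(C) \to \prod_v \operatorname{Pic}^0(\tilde C_v) \to 1,
$$
whose extension class is supported on the cycle vertices: on such a vertex $v$, the class equals $\Oc(p_v - q_v)$, where $p_v, q_v \in \tilde C_v$ are the two preimages in $\tilde C_v$ of the cycle node(s) incident to $C_v$ (see, e.g., \cite[Ch.~9]{BLR90}). When $\tilde C_v \cong \P^1$ this class vanishes automatically; when $g(\tilde C_v) \geq 1$ the points $p_v \neq q_v$ are distinct on the smooth curve $\tilde C_v$ (because $C$ has only nodes), so by injectivity of Abel--Jacobi the class is nonzero. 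Consequently, $\operatorname{Jac}(C)$ is a trivial $\Gm$-extension iff every cycle component has geometric genus zero. I expect this identification of the extension class to be the main technical input of the proof, though it is classical.

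Now given $C \in \Mc_g^{\leq 1}$ satisfying the above, let $\mathcal C \subset C$ be the union of the cycle components. It is a curve of arithmetic genus $1$ whose components are all rational: either a single nodal cubic (if the cycle in $\Gamma_C$ is a self-loop) or a cycle of $\P^1$'s of length at least two. The closure of $C \setminus \mathcal C$ is a disjoint union of compact-type subcurves $T_1, \ldots, T_l$, each attached to $\mathcal C$ at a single node, which yields a marked point on both $T_j$ and $\mathcal C$. Stability of $C$ forces each component of $\mathcal C$ to carry at least one such attachment, so $\mathcal C$ with its $l$ marked points has no rational tails and defines a point of $\Mc_{1,l}^{cycle}$; similarly each $T_j$ has arithmetic genus $g_j \geq 1$, since otherwise some leaf component would be an unstable $\P^1$ with a single special point. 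Using $p_a(C) = 1 + \sum_j g_j = g$ and reordering so that $g_1 \leq \ldots \leq g_l$, we obtain a partition $\mu$ of $g-1$ together with a preimage of $C$ under $\xi_\mu$. Finally, disjointness of the images holds because the multiset $\{g_1, \ldots, g_l\}$ is intrinsic to $C$: the subcurve $\mathcal C$ is characterized as the union of the irreducible components corresponding to the unique cycle of $\Gamma_C$, and each $g_j$ is the arithmetic genus of the $j$-th connected component of $C \setminus \mathcal C$.
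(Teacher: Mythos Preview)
Your argument is correct and follows essentially the same route as the paper: identify the extension class of $\operatorname{Jac}(C)$ as $(\Oc_{\tilde C_v}(p_v-q_v))_v$ supported on the cycle vertices, conclude that it vanishes iff every cycle component is rational, and then decompose $C$ into the cycle subcurve and compact-type trees. The only cosmetic difference is that the paper first isolates the cycle subcurve $D$ and uses the product decomposition $\operatorname{Jac}(C)=\operatorname{Jac}(D)\times\prod\operatorname{Jac}(C_i)$ to reduce to the pure-cycle case, whereas you compute the extension class for $C$ directly; you are also slightly more explicit in invoking Abel--Jacobi and in verifying disjointness, which the paper leaves implicit.
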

\begin{proof}
    Consider a prestable curve $D$ of genus $h$ whose dual graph is a cycle of length $k$. Denote by $D_1, \ldots , D_k$ the irreducible components of $\widetilde{D}$, and consider points $p_i, q_i \in D_i$ such that $q_i$ and $p_{i+1}$ are identified in $C$. Then, $\operatorname{Jac}(D)$ is the $\mathbb G_m$-extension of $\operatorname{Jac}(\widetilde{D})$ that corresponds to
    $$
    (\Oc_{D_1}(p_1-q_1), \ldots , \Oc_{D_k}(p_k-q_k)) \in \operatorname{Pic}^{0,\ldots ,0}(\widetilde{D}) \equiv \operatorname{Jac}(\widetilde{D})^\vee\, .
    $$
    Therefore, the semiabelian variety $\operatorname{Jac}(D)$ lies in $\Bc_h$ if and only if $\Oc_{D_i}(p_i-q_i)= \Oc_{D_i}$; this is, if and only if all the $D_i$ are rational.

    Now consider a general stable curve $C$ of genus $g$ whose dual graph $\Gamma_C$ has Betti number $1$. There is a subgraph of $\Gamma_C$ which is minimal among all the subgraphs that have Betti number $1$; let $D \subset C$ be the curve that corresponds to such a graph. Then $\overline{C \smallsetminus D}$ is a disjoint union of $l$ curves $C_i$ of compact type, and by stability $g(C_i) >0$. Moreover,
    $$
    \operatorname{Jac}(C) = \operatorname{Jac}(D) \times \prod \operatorname{Jac} (C_i)\, ,
    $$
    so $\operatorname{Jac}(C)$ lies in $\Bc_g$ if and only if $\operatorname{Jac}(D)$ lies in $\Bc_g$, which happens if and only if $\widetilde{D}$ is a union of rational curves (and in particular $g(D)=1$), so $C$ lies in the image of $\xi_{\mu}$ for the partition $\mu = (g(C_1), \ldots , g(C_l))$.
    
\end{proof}

\begin{figure}[ht]
    \centering
    \begin{tikzpicture}
\node[draw, circle] (v1) at (1, 2) {0};
\node[draw, circle] (v3) at (2, 0.2) {0};
\node[draw, circle] (v4) at (0, 0.2) {0};
\node[draw, circle] (v5) at (-1, -0.8) {1};
\node[draw, circle] (v6) at (0.5, 3) {3};
\node[draw, circle] (v7) at (3, -1) {1};
\node[draw, circle] (v8) at (1.5, 3) {4};
\node[draw, circle] (v9) at (-1, 0) {1};

\node[draw, circle] (v10) at (6, 2) {0};
\node[draw, circle] (v11) at (6,0) {10};

\draw (v10) to[out=45, in=135, looseness=10] (v10);
\draw (v11) -- (v10);
\draw (v1) -- (v3);
\draw (v4) -- (v9);
\draw (v9) -- (v5);
\draw (v7) -- (v3);
\draw (v1) -- (v6);
\draw (v1) -- (v8);
\draw (v3) -- (v4);
\draw (v4) -- (v1);

\end{tikzpicture}
    \caption{Two examples of the dual graph of a curve of genus $11$ in $\mathcal{Z}$. They correspond to the partitions $(1,2,3,4)$ and $(10)$}
    \label{fig:enter-label}
\end{figure}
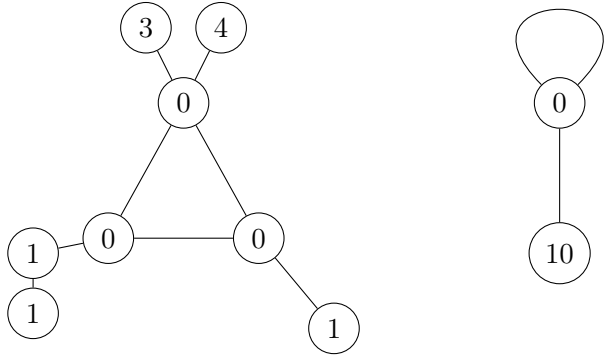

\begin{rmk}
    Note that
    $$
    \operatorname{dim} (\Mc_{g_1,1}^{ct} \times \ldots \times \Mc_{g_l, 1}^{ct} \times \Mc_{1,l}^{cycle})  = \sum_{i=1}^l (3(g_i-1)+1) + (l-1) = \operatorname{dim}(\Mc_{g}^{\leq 1})- (l+1)\, ,
    $$
    and that $\Mc_{1,l}^{cycle}$ is not proper except for the case $l=1$, where it is isomorphic to $\mathrm{B} \Z/2\Z$. In particular,
    $$
    \int_{\Mc_{1,1}^{cycle}} 1 = \frac{1}{2}\, .
    $$
\end{rmk}

When $\mu = (g-1)$, the image of $\xi_\mu$ is defined to be the \emph{principal locus}.
\begin{lem}\label{lem: closed}
    The principal locus is closed in $\Zc$.
\end{lem}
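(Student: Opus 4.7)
The plan is to prove the lemma via a valuative criterion within $\Mc_g^{\leq 1}$. I take a DVR $A$ with generic point $\eta$ and closed point $0$, and a stable family $\mathcal{X}/A$ of genus-$g$ curves with $\mathcal{X}_\eta$ in the principal locus and $\mathcal{X}_0\in\Zc$, and aim to show $\mathcal{X}_0$ also lies in the principal locus.

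First, I decompose the generic fiber as $\mathcal{X}_\eta = C_\eta\cup R_\eta$ with $C_\eta\in\Mc_{g-1,1}^{ct}$ and $R_\eta$ the nodal cubic in $\Mc_{1,1}^{cycle}$. The decomposition produces classifying morphisms $\eta\to\Mb_{g-1,1}$ and $\eta\to\Mb_{1,1}$, which by properness of the targets extend (possibly after a finite base change) to $\operatorname{Spec}(A)$, giving families $C_A$ and $R_A$. Using the separatedness of $\Mb_g$ together with the functoriality of the gluing map $\Mb_{g-1,1}\times\Mb_{1,1}\to\Mb_g$, the gluing of the extended families agrees with $\mathcal{X}$, so $\mathcal{X}_0 = C_0\cup R_0$.

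Two observations then conclude the argument. Since $\Mc_{1,1}^{cycle}=\mathrm{B}\Z/2\Z$ is a single geometric point, hence a closed substack of $\Mb_{1,1}$, the preimage of $\Mc_{1,1}^{cycle}$ under $R_A$ is a closed subscheme of $\operatorname{Spec}(A)$ containing $\eta$, so it equals all of $\operatorname{Spec}(A)$; thus $R_0$ is again the nodal cubic and contributes $1$ to the first Betti number of the dual graph of $\mathcal{X}_0$. Because $\mathcal{X}_0\in\Mc_g^{\leq 1}$ has total Betti number at most $1$, the piece $C_0$ must have Betti number $0$, i.e.\ $C_0\in\Mc_{g-1,1}^{ct}$. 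Hence $\mathcal{X}_0$ lies in the principal locus.

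The delicate step will be the identification of the gluing of $(C_A,R_A)$ with $\mathcal{X}$: if the attachment point specializes onto a node of $C_A$, one must account for possible bubbling, but the separatedness of $\Mb_g$ ensures the two stable families over $A$ coincide once they agree on the generic fiber. The remaining Betti-number bookkeeping is a straightforward consequence of the stratification produced by Lemma \ref{lem: maps}.
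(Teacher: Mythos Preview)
Your argument is correct and in fact proves a slightly stronger statement than the paper's: the principal locus is closed already in $\Mc_g^{\leq 1}$, not just in $\Zc$ (you only use $\mathcal{X}_0\in\Zc$ via the inclusion $\Zc\subset\Mc_g^{\leq 1}$). The paper instead argues combinatorially on dual graphs: it classifies the five types of edges in the dual graph of a curve in the image of $\xi_\mu$, tracks where each edge contraction sends the curve (staying in $\xi_\mu$, passing to $\xi_{\mu'}$ for a finer $\mu'$, or leaving $\Zc$), and concludes by noting that $(g-1)$ admits no nontrivial refinement. Your route is more moduli--theoretic: it rests on the properness of the gluing morphism $\Mb_{g-1,1}\times\Mb_{1,1}\to\Mb_g$, the closedness of the nodal-cubic point in $\Mb_{1,1}$, and the additivity of the first Betti number under gluing along a separating node. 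The paper's approach yields finer information about how the strata $\xi_\mu$ interact under specialization, which feeds into the later analysis of $\Zc'$, while yours is shorter and avoids the case analysis. Incidentally, your worry about ``bubbling'' is unnecessary: the gluing morphism is defined on all of $\Mb_{g-1,1}\times\Mb_{1,1}$, so separatedness of $\Mb_g$ settles the identification cleanly, as you note.
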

\begin{proof}
    If $\mu = (g_1, \ldots , g_l)$ then the dual graph of a curve in the image of $\xi_\mu$ is a stable cycle with $l$ markings and a collection of rooted trees together with an assignment of a tree to each marking on the cycle. There are five types of edges:
\begin{itemize}
    \item edges belonging to the cycle, when it has length at least $2$,
    \item edges belonging to the tree,
    \item edges between a vertex of the cycle and the root of the associated tree when the genus of the root is not $0$,
    \item edges between  a vertex of the cycle and the root of the tree when the genus of the root is $0$, or
    \item the edge of the cycle, when it has length $1$.
\end{itemize}
If we contract an edge of the first two types, the curve stays in $\xi_{\mu}$. If we contract an edge of the third type, the new curve lies in the image of $\xi_{\mu'}$ where $\mu'$ is a refinement of $\mu$, and the last two contractions take the curve outside of $\Zc$. The partition $(g-1)$ is not the refinement of any other partition, so the principal locus is closed under specialization.
\end{proof}
In particular, $\xi_{(g-1)}$ is proper. Let $\Zc'$ be the closure of the complement of the principal locus in $\Zc$.
\begin{lem}\label{lem: lg-2 vanishing}
    $\lambda_{g-2} \in \CH^{g-2}(\Mc_g^{\leq 1})$ vanishes when restricted to $\Zc'$.
\end{lem}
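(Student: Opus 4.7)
The plan is to use Lemma~\ref{lem: maps}, which covers $\mathcal{Z}'$ by the images of $\xi_\mu$ over partitions $\mu=(g_1,\ldots,g_l)$ of $g-1$ with $l\geq 2$. It therefore suffices to show that $\xi_\mu^*\lambda_{g-2}$ vanishes on the domain of $\xi_\mu$ for each such partition.

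First, I would analyze the pullback of the Hodge bundle. The gluing $\xi_\mu$ is a composition of tree-style gluings (attaching the $i$-th compact-type piece to $D$ at the $i$-th marking) on top of the cycle $D$ itself. The standard formula for Hodge bundles under tree-gluings produces the direct sum decomposition
$$\xi_\mu^*\mathbb E_g \;\cong\; \bigoplus_{i=1}^{l} p_i^*\mathbb E_{g_i} \;\oplus\; p_D^*\mathbb E_D,$$
where $p_i$ and $p_D$ are the projections from $\mathrm{dom}\,\xi_\mu$. The rank-one bundle $\mathbb E_D$ on $\mathcal M_{1,l}^{\mathrm{cycle}}$ is rationally trivial: the canonical cycle differential $dz/z$ (globally defined up to the sign from the cycle-reversing automorphism) gives a nowhere-vanishing section, so $c_1(\mathbb E_D)=0$ in $\mathrm{CH}^1_{\mathbb Q}$.

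Consequently $\xi_\mu^* c(\mathbb E_g) = \prod_{i=1}^{l} c(\mathbb E_{g_i}^{(i)})$. Using $\sum_i g_i = g-1$, the only way to express $g-2$ as $\sum k_i$ with $k_i\leq g_i$ is to have $k_{i_0}=g_{i_0}-1$ for a single index and $k_j=g_j$ for all others, so extracting the degree $(g-2)$ part gives
$$\xi_\mu^*\lambda_{g-2} \;=\; \sum_{i=1}^{l}\lambda_{g_i-1}^{(i)}\prod_{j\neq i}\lambda_{g_j}^{(j)}.$$

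To finish, I would exhibit the vanishing of this sum for $l\geq 2$. Every summand contains at least two top Chern classes $\lambda_{g_j}^{(j)}$ from different factors, and Mumford's relation $c(\mathbb E)c(\mathbb E^\vee)=1$ on each compact-type factor $\mathcal M_{g_j,1}^{ct}$ (which pulls back from $\mathcal A_{g_j}$ via the Torelli morphism on compact type) yields $(\lambda_{g_j}^{(j)})^2=0$ and, in degree $2g_j-2$, the identity $\lambda_{g_j-1}^2 = 2\lambda_{g_j-2}\lambda_{g_j}$. Multiplying the sum by the top class $\xi_\mu^*\lambda_{g-1}=\prod_i\lambda_{g_i}^{(i)}$, for each fixed $i$ the product of Chern classes picks up a factor $(\lambda_{g_j}^{(j)})^2$ for every $j\neq i$, and by Mumford these cross-terms vanish; this immediately gives $\xi_\mu^*(\lambda_{g-1}\lambda_{g-2})=0$. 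The main obstacle is the remaining algebraic step, which uses $\lambda_{g_j-1}^2=2\lambda_{g_j-2}\lambda_{g_j}$ to rewrite each summand and match it combinatorially against other summands through the pairwise $(\lambda_{g_j})^2=0$ relations across the $l\geq 2$ factors, collapsing the full sum to zero in the tensor product Chow ring $\bigotimes_i \mathrm{CH}^*(\mathcal M_{g_i,1}^{ct})_{\mathbb Q}$.
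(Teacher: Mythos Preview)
Your setup matches the paper: reduce to the strata $\xi_\mu$ with $l\ge 2$, split the Hodge bundle, and arrive at
\[
\xi_\mu^*\lambda_{g-2}\;=\;\sum_{i=1}^{l}\lambda_{g_i-1}^{(i)}\prod_{j\ne i}\lambda_{g_j}^{(j)}.
\]
The gap is in the last step. Mumford's relation on each factor only gives quadratic identities such as $(\lambda_{g_j}^{(j)})^2=0$ and $\lambda_{g_j-1}^2=2\lambda_{g_j-2}\lambda_{g_j}$; it does \emph{not} force the linear class $\lambda_{g_j}^{(j)}$ to vanish. In the tensor product $\bigotimes_i \CH^*(\Mc_{g_i,1}^{ct})$ the $l$ summands above sit in pairwise distinct multidegrees (the $i$-th summand has degree $g_i-1$ in the $i$-th slot and $g_j$ elsewhere), so no homogeneous relation inside a single factor can produce a cancellation between different summands. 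Your proposed ``combinatorial matching'' therefore cannot collapse the sum; already for $l=2$ one is left with $\lambda_{g_1-1}^{(1)}\lambda_{g_2}^{(2)}+\lambda_{g_1}^{(1)}\lambda_{g_2-1}^{(2)}$, and nothing in Mumford's relation identifies these two terms. What you do prove, namely $\xi_\mu^*(\lambda_{g-1}\lambda_{g-2})=0$, is weaker than what is needed.

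The missing input, and the one the paper uses, is that the top Hodge class vanishes on compact type: $\lambda_h|_{\Mc_{h,1}^{ct}}=0$ (equivalently, $\lambda_h|_{\Ac_h}=0$, pulled back by the Torelli map on compact type). Since $l\ge 2$, every summand contains at least one factor $\lambda_{g_j}^{(j)}$ with $j\ne i$ living on $\Mc_{g_j,1}^{ct}$, so each summand is zero individually. You were one sentence away: you already noted that Mumford's relation on $\Mc_{g_j,1}^{ct}$ is pulled back from $\Ac_{g_j}$; the same Torelli argument gives $\lambda_{g_j}^{(j)}=0$ there, and that finishes the proof immediately without any further algebra. (A minor point: $\Zc'$ is the \emph{closure} of the non-principal strata, so you should also remark, as the paper does via Lemma~\ref{lem: closed}, that this closure stays inside the union of $\xi_\mu$ with $l\ge 2$.)
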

\begin{proof}
    From the discussion in Lemma \ref{lem: closed}, the points in $\xi_{(g-1)} (\Mc_{g,1}\times \Mc_{1,1}^{cycle}) $ cannot be in $\Zc'$, so the normalization of any curve in $\Zc'$ has to be a union of rational curves and curves of genus $g_1, \ldots , g_l$ with $\sum g_i = g-1$ and $l \geq 2$, but note that
    $$
    c_{g-2} \left( \bigoplus \mathbb E_{g_i}\right) = \sum_{i =1}^l c_{g_{i}-1}(\mathbb E_{g_i}) \cdot \prod_{j \neq i} c_{g_j}(\mathbb E_g)\, ,
    $$
    which vanishes because the domain of all the morphisms $\xi_\mu$ is a space of curves of compact type and $\lambda_h|_{\Mc^{ct}_h} =0$.
\end{proof}
$\xi_{(g-1)}$ is a regular embedding with normal bundle $\mathbb L^\vee \boxplus \mathcal O_{\Mc_{1,1}^{cycle}}$, where $\mathbb L$ is the cotangent line, and therefore the excess class for the principal component in the diagram \eqref{eqn: cartesian aquare} is
$$
c_{top} \left( N_j - N_{\xi_{(g-1)}}\right) = c_{g-2}(\mathbb E_{g-1}^\vee - \mathbb L^\vee).
$$

By the residual intersection formula \cite[Corollary 9.2.3]{Ful94},
$$
j^!([\mathcal B_g]) = \xi_{(g-1),*}\left( \left[\frac{c(\mathbb E_{g-1}^\vee)}{1-\psi}\right]_{g-2}\right) + \mathbf {R}
$$
where $\mathbf{R}$ is the residual class supported on $\Zc'$. By Lemma \ref{lem: lg-2 vanishing}, $\lambda_{g-2} j^!([\Bc_g])$ has a natural extension to $\Mb_g$, namely,
$$
\lambda_{g-2}\left[\overline{\xi}_{(g-1),*}\left( \frac{c(\mathbb E_{g-1}^\vee)}{1-\psi}\right)\right]_{g} \in \operatorname{CH}^{2g-2}(\Mb_{g}),
$$
where $\overline{\xi}_{(g-1)} : \Mb_{g-1,1} \times \Mc_{1,1}^{cycle} \to \Mb_g$ is the gluing map.
\begin{proof}[Proof of Theorem \ref{lem: proportional}]
    Using formula \eqref{eqn: two pairings},
$$
\epsilon_{\Ac_{g}^{\leq 1}} (\lambda_{g-2}[\Bc_g] \cdot [\Jc_g^{\leq 1}]) = \left(\int_{\Mc_{1,1}^{cycle}} 1 \right)\left(\int_{\Mb_{g-1,1}} \frac{\lambda_{g-1}\lambda_{g-2}c(\mathbb E_{g-1}^\vee)}{1-\psi}\right) = \frac{1}{2}\frac{1}{(2g-2)!}\frac{|B_{2g-2}|}{2g-2}\, ,
$$
by the work in \cite{FP00b}. In \cite{FP00}, the authors also showed that
$$
\epsilon_{\Ac_{g}^{\leq 1}} (\lambda_{g-2}\lambda_g \cdot [\Jc_g^{\leq 1}]) = \int_{\Mb_g} \lambda_g \lambda_{g-1}\lambda_{g-2} = \frac{1}{2(2g-2)!}\frac{|B_{2g-2}|}{2g-2}\frac{|B_{2g}|}{2g}\, .
$$

Dividing the last two equations, we obtain the value of $\tau(g)$.
\end{proof}

\section*{The Euler characteristic of \texorpdfstring{$\Ac_{g,\delta}$}{Agd}}

First, we see that the evaluation of $\tau(g)$ for all $g$ is equivalent to the knowledge of the Euler characteristic of $\Ac_g$:
\begin{proof}[Proof of Theorem \ref{thm: eulerchar Ag}]
    Since $j^* \mathbb E_g = \mathbb E_{g-1} \oplus \Oc$, where $j : \Ac_{g-1} \to \Ac_{g}^{\leq 1}$, we see that
    $$
    \int_{\overline{\Ac}_g} \lambda_g \ldots \lambda_1 = \epsilon_{\Ac_{g}^{\leq 1}}([\tau(g) [\Bc_g]\lambda_{g-1}\ldots \lambda_1]) = \frac{\tau(g)}{2} \int_{\overline{\Ac}_{g-1}} \lambda_{g-1} \ldots \lambda_1\, ,
    $$
    and so, by the logarithmic Gauss-Bonnet \eqref{eqn: euler characteristic of Ag}, we see that
    $$
    \chi(\Ac_g) = (-1)^g \tau(g) \chi(\Ac_{g-1})
    $$
    and note that $(-1)^g\tau(g) =(-1)^g \frac{|B_{2g}|}{2g}= \zeta(1-2g)$.
\end{proof}

In order to compare $\Ac_{g}$ and $\Ac_{g,\delta}$, we introduce a level structure. Let $\theta$ be a polarization on an abelian variety. Then
$$
\ker (\theta) \cong (\Z/d_1\Z \times \ldots \times \Z/d_g \Z)^2
$$
as symplectic groups, where $(d_1, \ldots , d_g)$ is the type of $\theta$. The moduli space of triplets $(A, \theta, F)$, where $(A, \theta) \in \Ac_{g,\delta}$ and $F$ is a symplectic basis of $\ker(\theta)$ is denoted by $\Ac_{g, \delta}^\mathsf{lev}$, and has \'etale morphisms
$$
\begin{tikzcd}
                 & {\Ac_{g,\delta}^{\mathsf{lev}}} \arrow[ld, "\pi_\delta"'] \arrow[rd, "\varphi_\delta"] &       \\
{\Ac_{g,\delta}} &                                                                                        & \Ac_g
\end{tikzcd},
$$
where $\pi_\delta$ forgets the symplectic basis and $\varphi_\delta$ sends an abelian variety $X$ to its quotient by a Lagrangian subgroup of $\ker(\theta)$, see \cite[Section 2]{I24} for details.

It follows that
$$
\chi(\Ac_{g,\delta}) = \frac{\deg(\varphi_{\delta})}{\deg(\pi_{\delta})}\chi(\Ac_g)\, ,
$$
and
$$
\frac{\deg(\varphi_{\delta})}{\deg(\pi_{\delta})} = d_g^{2g-2}d_{g-1}^{2g-6}\ldots d_1^{-2g+2}\prod_{1\leq i<j\leq g} \prod_{p \mid d_j/d_i}\frac{(1-p^{-2(j-i+1)})}{(1-p^{-2(j-i)})}
$$
was computed in \cite[Proposition 26]{I24}. This proves Theorem \ref{thm: t1}.

\section*{The Hirzebruch-Mumford proportionality theorem}

Consider the Lagrangian Grassmanian $\operatorname{LG
}_g$, that parametrizes Lagrangian subspaces of dimension $g$ inside a symplectic vector space of dimension $2g$. It is a smooth projective variety of dimension $\binom{g+1}{2}$. The universal Lagrangian subspace
$$
\mathbb S_g \to \operatorname{LG}_g
$$
defines a vector bundle of rank $g$, with Chern classes $x_i = c_i(\mathbb S_g)$, and $\operatorname{Sym}^2(\mathbb S_g)$ is the cotangent bundle to $\operatorname{LG}_g$. Mumford shows in \cite{M77} that there is a constant $K(g)$ such that for any $a_i \in \mathbb N$,
$$
\int_{\overline{\Ac}_g}\lambda_1^{a_1}\ldots \lambda_g^{a_i} = K(g) \int_{\operatorname{LG}_g} x_1^{a_1} \ldots x_n^{a_n}\, .
$$
This constant has been determined in \cite[Theorem 1.13]{vdG99} by the Gauss-Bonnet formula:
$$
\chi(\Ac_g) = K(g) \chi(\operatorname{LG}_g)\, .
$$
An approach to prove the formula for $\chi(\Ac_g)$ could be to determine $K(g)$ in an alternative way.

Van der Geer also shows that the assignment $\lambda_i \mapsto x_i$ defines an isomorphism between the subring of $\operatorname{CH}^*(\Ac_g)$ generated by the classes $\lambda_1, \ldots , \lambda_{g-1}$ (also known as the \emph{tautological ring}) and the cohomology ring of $\operatorname{LG}_{g-1}$. One of the steps of the proof given there relied on a characteristic $p$ argument and the ampleness of $\lambda_1$. We give a different proof:
\begin{cor}[\protect{\cite[Corollary 1.4]{vdG99}}]\label{cor: nonzero}
    $\lambda_1 \ldots \lambda_{g-1} \neq 0$ in $\CH_g(\Ac_g)$.
\end{cor}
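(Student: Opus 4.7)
The plan is to recognize that $\lambda_1\cdots\lambda_{g-1}$ has exactly the right codimension to be tested by the integration map $\epsilon_{\Ac_g^{\leq 0}}$ defined earlier using the vanishing of $\lambda_g$ on the boundary, and that the resulting pairing is $(-1)^{\binom{g+1}{2}}2^{-g}\chi(\Ac_g)$, which is nonzero by Theorem \ref{thm: eulerchar Ag}. No further geometric input is needed beyond what the paper has already set up.

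First I would observe that $\lambda_1\cdots\lambda_{g-1}$ lives in codimension $1+2+\cdots+(g-1)=\binom{g}{2}=\binom{g+1}{2}-g$, which is precisely the domain of the evaluation map
$$
\epsilon_{\Ac_g^{\leq 0}} \colon \CH^{\binom{g+1}{2}-g}(\Ac_g) \longrightarrow \Q, \qquad \alpha \mapsto \int_{\overline{\Ac}_g}\overline{\alpha}\,\lambda_g,
$$
from the $k=0$ case of the $\lambda_{g-k}$-evaluations. This map is well-defined by the vanishing result of Canning--Molcho--Oprea--Pandharipande, $\lambda_g|_{\partial\overline{\Ac}_g}=0$.

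Second, I would take $\overline{\alpha}=\lambda_1\cdots\lambda_{g-1}$, extended to $\overline{\Ac}_g$ via the Chern classes of the Hodge bundle of the universal semiabelian scheme (these are the canonical extensions of the $\lambda_i$ from $\Ac_g$). Then
$$
\epsilon_{\Ac_g^{\leq 0}}(\lambda_1\cdots\lambda_{g-1}) = \int_{\overline{\Ac}_g}\lambda_1\cdots\lambda_{g-1}\lambda_g = (-1)^{\binom{g+1}{2}}2^{-g}\chi(\Ac_g),
$$
by the logarithmic Gauss--Bonnet identity \eqref{eqn: euler characteristic of Ag}.

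Finally, Theorem \ref{thm: eulerchar Ag} gives $\chi(\Ac_g)=\zeta(-1)\cdots\zeta(1-2g)$, and each factor $\zeta(1-2k) = -B_{2k}/(2k)$ is nonzero since all even Bernoulli numbers are nonzero. Hence $\lambda_1\cdots\lambda_{g-1}$ pairs to a nonzero rational under $\epsilon_{\Ac_g^{\leq 0}}$ and therefore cannot vanish in $\CH^*(\Ac_g)$. There is no real obstacle in this argument; the whole point is that the earlier sections of the paper have already built the machinery (the evaluation map, the Gauss--Bonnet identity, and the closed formula for $\chi(\Ac_g)$), and the corollary is essentially a repackaging of non-vanishing of $\chi(\Ac_g)$ as an intersection-theoretic statement on the open part.
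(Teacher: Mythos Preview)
Your proposal is correct and matches the paper's own proof essentially line for line: apply the $k=0$ evaluation map $\epsilon_{\Ac_g}$ to $\lambda_1\cdots\lambda_{g-1}$, recognize the result as $(-1)^{\binom{g+1}{2}}2^{-g}\chi(\Ac_g)$ via \eqref{eqn: euler characteristic of Ag}, and invoke the nonvanishing of $\chi(\Ac_g)$. The only addition you make is spelling out that the zeta values $\zeta(1-2k)=-B_{2k}/(2k)$ are nonzero, which the paper leaves implicit.
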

\begin{proof}
    If it were $0$ then
    $$
    0=\epsilon_{\Ac_g}(\lambda_1\ldots \lambda_{g-1})=\int_{\overline{\Ac}_g} \lambda_1\ldots \lambda_g = (-1)^{\binom{g+1}{2}}2^{-g} \chi(\Ac_g),
    $$
    a contradiction.
\end{proof}

\end{document}